\theoremstyle{plain}
\newtheorem{theorem}{Theorem}[section]
\newtheorem{corollary}[theorem]{Corollary}
\newtheorem{lemma}[theorem]{Lemma}
\theoremstyle{definition}
\newtheorem{definition}[theorem]{Definition}
\newtheorem{example}[theorem]{Example}
\newtheorem{remark}[theorem]{Remark}
\newcommand{\I}{\mathcal{I}}
\newcommand{\N}{N}
\newcommand{\Z}{\mathbb{Z}}
\newcommand{\R}{\ensuremath \mathbb{R}}
\renewcommand{\P}{\ensuremath \mathcal{P}}
\newcommand{\ra}{\rightarrow}
\renewcommand{\min}{\mathrm{min}}
\newcommand{\rk}{\mathrm{rk}}
\title[Stirling numbers and Kazhdan-Lusztig Polynomials]{Stirling numbers in Braid matroid Kazhdan-Lusztig Polynomials}
\author{Trevor K. Karn}
\author{Max D. Wakefield}
\thanks{}
\address{}
\email[]{}
\begin{document}
\maketitle

\begin{abstract}

Restricted Whitney numbers of the first kind appear in the combinatorial recursion for the matroid Kazhdan-Lusztig polynomials. In the special case of braid matroids (the matroid associated to the partition lattice, the complete graph, the type A Coxeter arrangement and the symmetric group) these restricted Whitney numbers are Stirling numbers of the first kind. We use this observation to obtain a formula for the coefficients of the Kazhdan-Lusztig polynomials for braid matroids in terms of sums of products of Stirling numbers of the first kind. This results in new identities between Stirling numbers of the first kind and Stirling numbers of the second kind, as well as a non-recursive formula for the braid matroid Kazhdan-Lusztig polynomials.

\end{abstract}

%------------- Section: Introduction  -------------% (Max)
\section{Introduction}

The Kazhdan-Lusztig polynomial of a matroid $M$ is recursively defined in \cite{klpoly_epw} using the lattice of flats of $M$. These polynomials mimic the classic Kazhdan-Lusztig polynomials for Coxeter groups, discovered and studied by Kazhdan and Lusztig in \cite{kl79}. The combinatorial formulation of both the classic Kazhdan-Lusztig and the matroid Kazhdan-Lusztig polynomials are special instances of a wider framework built in \cite{St-92} and \cite{Br-03} ($R$ and $\chi$ are the $P$-kernels of the Kazhdan-Lustig-Stanley polynomials respectively). In the cases of realizable matroids and Weyl groups Kazhdan-Lusztig polynomials have significant geometric interpretations; both are polynomials the Poincar\'e polynomials of the intersection cohomology of the reciprocal plane and Schubert varieties, respectively (see \cite{klpoly_epw} and \cite{KL-80}).

Arguably, the most important family of matroids is the braid matroids $\{B_n\}_{n\geq 2}$. They are graphic matroids corresponding to the complete graphs. They are also the matroids associated to the Type A Coxeter groups (a.k.a. the symmetric groups). We call them braid matroids because the complex complement of the Type A Coxeter arrangement is exactly the configuration space of $n$ points in $\R^2$ whose fundamental group is the pure braid group. Furthermore, the classical Kazhdan-Lusztig polynomials are most well known for the Type A Coxeter groups (see \cite{BB-05} and \cite{Warr-11}).

At present, there is no known relationship between the classic Kazhdan-Lusztig polynomials on the Type A Coxeter groups and the matroid Kazhdan-Lusztig polynomials for braid groups other than that they come from $P$-kernels and are instances of Kazhdan-Lusztig-Stanley polynomials \cite{Br-03}. On the other hand, in \cite{proudfoot_preprint_17} Proudfoot works towards unifying the results from \cite{Br-03} and \cite{klpoly_epw}. In particular he shows that if a $P$-kernel satisfies a specific point-counting formula, then the associated Kazhdan-Lusztig-Stanley polynomial yields the intersection cohomology.

In \cite{klpoly_epw}, Elias, Proudfoot, the second author, and Young compute many coefficients of the polynomial for braid matroids, present a generating function identity, and give a formula for the first three degree coefficients in terms of Stirling numbers of the first and second kinds. Additional formulas for arbitrary matroids in terms of ``flag'' Whitney numbers of the second kind are presented in \cite{whitney} and \cite{z-polynomial_proudfootxuyoung}. In the case of braid matroids these formulas can be presented in terms of products of Stirling numbers of the second kind. 

In \cite{GPY-16} Gedeon, Proudfoot, and Young define the matroid equivariant Kazhdan-Lusztig polynomial (which is actually a virtual representation of the group acting on the matroid) and compute many coefficients for braid matroids. In \cite{GPY-17} the same authors revisit these examples and give a conjecture for the leading coefficient when matroid rank is even. Then in \cite{PY-17} Proudfoot and Young study the equivariant Kazhdan-Lusztig polynomials of braid matroids exclusively. There they interpret these virtual representations as $FS^{op}$ modules and present an asymptotic formula for their dimensions.

In this paper we present a combinatorial formula, Theorem \ref{main_theorem}, in terms of Stirling numbers of the first kind (rather than the second kind), for the (non-equivariant) Kazhdan-Lusztig polynomial coefficients for braid matroids. In order to present the formula we need an index set which contains special sequences of number partitions. The structure of this index set reveals the recursive definition in plain sight when compared to the formulas in \cite{whitney} and \cite{z-polynomial_proudfootxuyoung}. On the other hand the number of terms in Theorem \ref{main_theorem} is larger than those in \cite{whitney} and \cite{z-polynomial_proudfootxuyoung}.

An immediate corollary from Theorem \ref{main_theorem}, using \cite{whitney} and \cite{z-polynomial_proudfootxuyoung}, is an identity between Stirling numbers of the first and second kind. Using a result in \cite{adamchik_stirling}, Theorem \ref{main_theorem} gives a non-recursive formula for the coefficients of the Kazhdan-Lusztig polynomials for braid matroids. 

In Section \ref{matroids_more} we review some basic matroid theory, matroid Kazhdan-Lusztig polynomials, and results specific to braid matroids. The main theorem is then proved in Section \ref{mainthm}. In Section \ref{sec_corollaries} we list a few corollaries to Theorem \ref{main_theorem}. Finally we conclude with an appendix to list the main formula for certain small dimensions.

%------------- Section: Matroids, \mu, \chi, P(t), Stirling Numbers, Whitney formula for KL polynomials -------------%
\section{Matroids and more} \label{matroids_more}

A matroid, $M$, is a collection of subsets $\I$ from a ground set $E$ such that  \begin{enumerate}
\item[(M1)] \label{matroidaxiomone} For any set $A \in \I$, if $B \subseteq A$, then $B \in \I$.

\item[(M2)] \label{matroidaxiomtwo} For $A,B \in \I$, with $|A|>|B|$, then there is an element $a \in  A \backslash B$ such that $B \cup \{a\} \in \I$.
\end{enumerate} For a general reference on matroids, see \cite{oxley, welsh}. There are many different ways to represent matroids, but in this paper we view a matroid as its lattice of flats, $(L(M), \leq)$, where
$$L(M):= \{F \subseteq E : \forall x \notin F, \rk(\{F \cup \{x\}) > \rk F \}$$ and $\rk F=\mathrm{max}\{|A|: A\subseteq F \text{ and } A\in \I\}$ with order, $\leq$, given by set inclusion. The classical \text{ M\"obius function} $\mu : L(M)^2 \ra \Z $ is defined recursively for all $x_1 \leq x_2 \in L(M)$ by $\mu(x_1,x_2)=1$ if $x_1 = x_2$ and for $x_1 < x_2$ by \[ \sum \limits_{x_1 \leq a\leq x_2} \mu ( x_1, a) = 0\] and $\mu(x,y)=0$ if $x\not\leq y$. The \text{characteristic polynomial} $\chi (L,t)$ of a finite, rank $\ell$, lattice $L$ is \[ \chi(L,t) := \sum \limits_{x \in L} \mu (\hat{0}, x) t^{\ell -\rk(x)}\] (see \cite{stan_ecomb}). When $L$ is clear from context, we will write $\chi (t)$.

For $F \in L(M)$, the restriction at $F$ is the lattice $(M^F, \leq)$ where $M^F := \{x \in L(M) : x \geq F \}$. The localization at $F$ is $(M_F, \leq)$ with $M_F := \{x \in L(M) : x \leq F\}$. %The reader should associate the superscript as everything above $F$, and the subscript as everything below $F$.

\begin{definition}[Theorem 2.2, \cite{klpoly_epw}]\label{klpoly_defn} Let $M$ be a matroid. Let $L(M)$ be the lattice of flats of $M$. The \text{Kazhdan-Lusztig matroid polynomial}, $P_M(t)\in \Z[t]$, is the unique polynomial that satisfies the following conditions:
\begin{enumerate}
\item If $\rk(M) = 0$, then $P_M(t)=1$.
\item If $\rk(M) > 0$, then $\deg(P_M(t))< \frac{\rk(M)}{2}$.
\item \label{ax_tre}  $t^{\rk(M)}P_M(t^{-1}) = \sum \limits_{F\in L(M)} \chi(M_F, t)P_{M^F}(t).$
\end{enumerate}
\end{definition}

The $n$-dimensional braid arrangement $\mathcal{B}_n$ is the collection of all hyperplanes $H_{i,j}$, where $H_{i,j}$ is the set of all points $(x_1, x_2, ..., x_n) \in \R^n$ such that $x_i = x_j$ for $1 \leq i < j \leq n$. We call the matroid associated to the braid arrangement the braid matroid, which we write as $B_n$. It is well known that the intersection lattice of $\mathcal{B}_n$ is isomorphic to the partition lattice $\mathcal{P}(n)$ of $[n]:=\{1,2,...,n\}$ (see \cite{OT_hyperplanes}). For a set partition $F  =\{p_1, p_2, ..., p_k\} $ of $[n]$, we denote the number of blocks as $\ell(F) = k$, and define $b_i := |p_i|$. The localization of the partition lattice at $F$ has the following useful property
\begin{equation}\label{localization_property} \mathcal{P}(n)_F \cong \mathcal{P}(b_1) \times \mathcal{P}(b_2) \times \cdots \times \mathcal{P}(b_k). \end{equation} Also, the restriction of the partition lattice at $F$ has the useful property that 
\begin{equation} \label{restriction_property} \mathcal{P}(n)^F \cong \mathcal{P}(\ell(F)).\end{equation}Since $\chi$ factors over products ($\chi(X \times Y, t) = \chi(X, t) \chi(Y,t)$), by equations (\ref{localization_property}) and (\ref{restriction_property}) the following are true

\begin{enumerate}
\item[(CP1)] $\chi (\mathcal{P}(n)_F, t) \cong \prod \limits_{i=1}^k \chi(\mathcal{P}(b_i), t) $,  \label{ceepee_one}

\item[(CP2)] $\chi (\mathcal{P}(n)^F, t) \cong \chi(\mathcal{P}(k), t) $. \label{ceepee_two}
\end{enumerate}

%Since $L(\mathcal{B}_n) \cong \mathcal{P}_n$, it is true that they have the same characteristic polynomial, $\chi_n(t)$. This gives us the equality \[ \chi_F(t) =\chi_{L_{\{b_1,b_2,...,b_k\}}}(t) = \chi_{ L_{b_1 \times b_2 \times ... \times b_k}}(t) =  \prod \limits_{i=1}^k \chi_{L_{b_i}}(t)   \] where $F \in L(\mathcal{B}_n)$ corresponds to $\{b_1, ..., b_k\} \in \mathcal{P}_n$ under isomorphism. 

For a number partition $\lambda \vdash n$, we will denote its \emph{blocks} by $b_i$ and the number of blocks by $\ell (\lambda)$ so that $\sum_{i=1}^{\ell(\lambda)} b_i = n$. We denote the number of set partitions whose blocks have size given by $\lambda$, as $m(\lambda)$. The explicit formula is known to be
\[ m( \lambda )= \frac{n!}{ \prod^{\ell ( \lambda)}_{i=1} b_i ! \cdot \prod^{b_1}_{j=1} ( b^t_j - b^t_{j+1} )!}, \] where $b^t_{\ell(\lambda)+1} :=0$.

Let $C_{n,i}$ be the Kazhdan-Lusztig polynomial coefficients of degree $i$ for $B_n$, that is \begin{equation*} P_n(t):=  P(B_n, t) = \sum \limits_{i \geq 0} C_{n,i}t^i.\end{equation*}

In \cite{klpoly_epw}, $C_{n,i}$ was computed for $1 \leq i \leq3$ in terms of Stirling numbers of the second kind, which are coefficients, $S(n,k)$, of the summation \[t^n = \sum \limits_{k=0}^{n} S(n,k) (t)_k . \] The Stirling numbers of the first kind $s(n,k)$ are the degree $k$ coefficients in \[ (t)_n = \frac{t!}{(t-n)!} = \sum_{k=0}^{n} s(n,k) t^k .\] 
Then the characteristic polynomial for $\mathcal{P}(n)$ is
\begin{equation}\label{chipn} \chi_n(t) := \chi( \mathcal{P}(n), t) =  \frac{(t)_n}{t} = \sum \limits_{k=1}^n s(n,k) t^{k-1}. \end{equation}

%------------- Section: Formula in terms of Stirling numbers of the first kind -------------%
\section{The main theorem} \label{mainthm}

In order to set up the main theorem, we first define an index set, over which we later sum. 
\begin{definition} \label{index_set_k_defn}
Let $n \geq 2$ and $i < \frac{n-1}{2}$. Define $\mathcal{K}_{n,i}$ to be the set of all triples $(\Lambda, A,\Xi)$ where $\Lambda = [\lambda_1, ..., \lambda_q]$ is a sequence of number partitions, and $A = [\alpha_1, ..., \alpha_q]$ and $\Xi = [\xi_1, ..., \xi_q]$ are sequences of integers which satisfy:
\begin{enumerate}[label = (\roman*)]
\item $\lambda_1 \vdash n$  \label{firstlbda}
\item $\lambda_j \vdash \ell(\lambda_{j-1})$ for all $1 < j \leq q$ \label{second_lbda}
%\item\label{third_lbda} for all $j \geq 1$, $\ell(\lambda_{j+1} )- 1  < \frac{\ell(\lambda_j) - 1}{2}$
\item \label{five_lbda} $\alpha_1 + \xi_1 = n - 1 - i$
\item $\alpha_j + \xi_j = \ell( \lambda_{j-1}) - 1 - \xi_{j-1}$ for $j>1$ \label{six_lbda}
\item $0 \leq \alpha_j \leq |\lambda_j| - \ell ( \lambda_j)$ for all $j$ \label{seven_lbda}
\item \label{eight_lbda} $\xi_j = 0$ when $\ell(\lambda_j) = 1 $
\item $0 \leq \xi_j < \frac{\ell(\lambda_j)-1}{2}$ when $\ell(\lambda_j) \geq 2$ \label{last_lbda}
\item \label{v} $\xi_j =0$ if and only if  $q = j$.
\end{enumerate}
\end{definition}
Note that this definition also stipulates that $\lambda_j \neq 1$ $\forall j$, because of \ref{second_lbda}, \ref{eight_lbda}, and \ref{v}.

\begin{example}\label{base_case}
For $n =2$ we compute $\mathcal{K}_{n,i}$. The only possible $i$ is zero. By axioms \ref{firstlbda}-\ref{v}, the possible $\Lambda$'s are $[2]$ and $[1+1,2]$. For $\Lambda = [2]$, by \ref{five_lbda}, $\alpha_1 + \xi_1 = 1$,  by \ref{eight_lbda}, $\xi_1 = 0$ so $\alpha_1 = 1$, and all other axioms are satisfied. For $\Lambda = [1+1,2]$, by \ref{five_lbda} we have $\alpha_1 + \xi_1 = 2-1-0 = 1$. By \ref{last_lbda}, $\xi_1 =0$ since  $\ell (1+1) = 2$, which implies that $\alpha_1 = 1$. However, by \ref{seven_lbda}, we have $\alpha_1 \leq |\lambda_1| - \ell(\lambda_1) = 0$, a contradiction, which means there will be no triple with $\Lambda = [1+1,2]$. Hence, $ \mathcal{K}_{2,0}= \{([2],[1],[0])\}$.

\end{example}

While $\mathcal{K}_{n,i}$ is a somewhat convoluted index set, it allows us to write the following theorem quite succinctly.

\begin{theorem}\label{main_theorem}
For $n \geq 2$ and $i < \frac{n-1}{2}$, 
\begin{equation*} \label{ans1}
C_{n,i} = \sum \limits_{(\Lambda, A, \Xi)} \left [  \prod \limits_{j= 1}^{q} m(\lambda_j) \sum \limits_{(d^j_k) } \prod \limits_{k=1}^{\ell(\lambda_j)} s(b_k^j, d^j _k ) \right ]
%C_{n,i} = \sum \limits_{(\Lambda, A, \Xi)} \left [  \prod \limits_{j= 1}^{q} m(\lambda_j) \sum \limits_{(c^j_k)_{k=1}^{\ell (\lambda)} } \prod \limits_{k=1}^{\ell(\lambda_j)} s(b_k^j, d^j _k ) \right ],
\end{equation*}
where $(\Lambda,A, \Xi) = ([\lambda_1, ..., \lambda_q], [\alpha_1, ... \alpha_q], [\xi_1,...\xi_q]) \in \mathcal{K}_{n,i}$,  $b_k^j$ is the $k^{\text{th}}$ block of $\lambda_j$, and the last sum is over all sequences $(d^j_k)= (d_1^j, ..., d_{\ell(\lambda_j)}^j)$ satisfying $\sum_{k=1}^{\ell(\lambda_j)} d_k^j = \alpha_j + \ell (\lambda_j)$ and $1 \leq d_k^j \leq b_k^j$.
\end{theorem}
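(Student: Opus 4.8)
The plan is to prove the identity by induction on $n$, unwinding the recursion of Definition \ref{klpoly_defn}. The first step is to rewrite its third condition at the level of coefficients. Since $\rk(B_n) = n-1$, grouping the set partitions $F \in \mathcal{P}(n)$ according to their block-size type $\lambda \vdash n$ and applying (CP1) and (CP2) turns that condition into
\begin{equation*}
t^{n-1} P_n(t^{-1}) = \sum_{\lambda \vdash n} m(\lambda) \left( \prod_{k=1}^{\ell(\lambda)} \chi_{b_k}(t) \right) P_{\ell(\lambda)}(t).
\end{equation*}
The all-singletons type (with $\ell(\lambda) = n$) corresponds to $F = \hat{0}$, where the localization is trivial and the restriction is all of $\mathcal{P}(n)$, so this single term equals $P_n(t)$. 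Because $\deg P_n < (n-1)/2$, the reflected polynomial $t^{n-1}P_n(t^{-1})$ is supported in degrees above $(n-1)/2$ while $P_n(t)$ is supported below it; comparing coefficients of $t^{n-1-i}$ for $i < (n-1)/2$ therefore gives the coefficient recursion
\begin{equation*}
C_{n,i} = [t^{\,n-1-i}] \sum_{\substack{\lambda \vdash n \\ \ell(\lambda) < n}} m(\lambda) \left( \prod_{k=1}^{\ell(\lambda)} \chi_{b_k}(t) \right) P_{\ell(\lambda)}(t), \tag{$\star$}
\end{equation*}
in which every surviving $\lambda$ has $\ell(\lambda) < n$.

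Second, I would extract the contribution of the characteristic-polynomial product. By equation \eqref{chipn}, $\chi_m(t) = \sum_{d=1}^m s(m,d)\, t^{d-1}$, so the coefficient of $t^{\alpha}$ in $\prod_{k=1}^{\ell(\lambda)} \chi_{b_k}(t)$ is exactly $\sum_{(d_k)} \prod_k s(b_k, d_k)$, summed over integer sequences with $\sum_k d_k = \alpha + \ell(\lambda)$ and $1 \leq d_k \leq b_k$ --- precisely the inner sum of the theorem. The support of this product runs from degree $0$ up to $\sum_k (b_k - 1) = |\lambda| - \ell(\lambda)$, which is the origin of the range constraint \ref{seven_lbda}.

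The core of the argument is to iterate $(\star)$. Writing $P_{\ell(\lambda_1)}(t) = \sum_{\xi_1 \geq 0} C_{\ell(\lambda_1),\xi_1}\, t^{\xi_1}$ and splitting the target degree as $n-1-i = \alpha_1 + \xi_1$ (constraint \ref{five_lbda}) factors each term of $(\star)$ as $m(\lambda_1)$ times the degree-$\alpha_1$ Stirling sum times $C_{\ell(\lambda_1),\xi_1}$. Since $\ell(\lambda_1) < n$ and since the degree bound $\xi_1 < (\ell(\lambda_1)-1)/2$ of constraint \ref{last_lbda} is exactly the hypothesis under which $(\star)$ and the inductive hypothesis apply to $C_{\ell(\lambda_1),\xi_1}$, I expand this coefficient as a sum over $\mathcal{K}_{\ell(\lambda_1),\xi_1}$. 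Prepending $(\lambda_1,\alpha_1,\xi_1)$ to each admissible triple of $\mathcal{K}_{\ell(\lambda_1),\xi_1}$ yields exactly the triples of $\mathcal{K}_{n,i}$: constraint \ref{second_lbda} records that the next partition partitions $\ell(\lambda_1)$, and constraint \ref{six_lbda} is the degree split one level down. The iteration halts precisely when the degree index reaches $0$: since $C_{m,0} = 1$ for every $m$ (immediate from $(\star)$), a partition $\lambda_j$ with $\xi_j = 0$ contributes only its $\chi$-product Stirling sum and ends the chain, which is the content of constraints \ref{eight_lbda} and \ref{v} (with $\ell(\lambda_j) = 1$ forcing $\xi_j = 0$ because $P_1 = 1$). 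Collecting the per-level factors $m(\lambda_j)$ and Stirling sums along each chain reproduces the summand of Theorem \ref{main_theorem}.

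The main obstacle is the bookkeeping: I must verify that constraints \ref{firstlbda}--\ref{v} describe the iterated substitution bijectively, so that each valid chain of expansions corresponds to exactly one admissible triple $(\Lambda, A, \Xi) \in \mathcal{K}_{n,i}$ and conversely, with no term double-counted or dropped. In particular the equivalence of constraint \ref{v}, ``$\xi_j = 0$ if and only if $j = q$,'' must be shown to coincide with ``the recursion terminates at step $j$''; the strict decrease $\ell(\lambda_j) < \ell(\lambda_{j-1})$ (which holds once $\lambda_j$ is not the all-singletons partition, a possibility that the degree constraints themselves exclude) guarantees the induction is well founded and rooted in the base case $n = 2$, where the formula returns $1 = C_{2,0}$ in agreement with $\mathcal{K}_{2,0} = \{([2],[1],[0])\}$ from Example \ref{base_case}. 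The remaining verifications --- that \ref{seven_lbda} and \ref{last_lbda} are exactly the supports of the $\chi$-product and of $P_{\ell(\lambda_j)}$ --- follow routinely from equation \eqref{chipn} and the degree bound of Definition \ref{klpoly_defn}.
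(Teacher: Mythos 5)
Your proposal is correct and follows essentially the same route as the paper's own proof: induction on $n$, grouping the terms of the defining recursion by number-partition type with multiplicity $m(\lambda)$, extracting the degree-$(n-1-i)$ coefficient via equation \eqref{chipn} to produce the Stirling sums, and expanding $C_{\ell(\lambda_1),\xi_1}$ by the inductive hypothesis so that prepending $(\lambda_1,\alpha_1,\xi_1)$ identifies the terms with $\mathcal{K}_{n,i}$. Your treatment is in fact slightly more careful than the paper's in one spot --- you explicitly note that the self-referential $F=\hat{0}$ (all-singletons) term drops out of the coefficient comparison because of the degree bound, a point the paper handles only implicitly through the degree constraints encoded in axioms \ref{seven_lbda}--\ref{last_lbda}.
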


\begin{proof} The essence of the proof is demonstrating a bijection from the index set $\mathcal{K}_{n,i}$ of Definition \ref{index_set_k_defn} to the decomposition (in terms of Stirling numbers of the first kind) of the recursive Definition \ref{klpoly_defn}. To present this bijection, we decompose the recursion step by step through induction on $n$, while simultaneously recording each axiom of Definition \ref{index_set_k_defn} as the axiom is used. 

First, note that $\rk(B_1) = 0$, hence by Definition \ref{klpoly_defn}, $P_1(t) = 1$. For $n = 2$, from Example \ref{base_case}, we know, $\mathcal{K}_{2,0}=\{([2],[1],[0])\}$, hence our formula gives 
\[C_{2,0}= m(2) s(2,2)=1.\]
This is confirmed by \cite{klpoly_epw}, as it is known that $C_{n,0} = 1$ for all $n$. 

Now for $n > 2$,  we will look at each term of the sum in the defining recursion, 
\begin{equation} \label{first_proof_step} t^{n-1}P_n(t^{-1}) = \sum \limits_{F\in L(\mathcal{B}_n)}   \chi((\mathcal{B}_n)_F, t) P((\mathcal{B}_n)^F,t). \end{equation}
By Equation (\ref{restriction_property}) and (CP1), the right hand side of (\ref{first_proof_step}) becomes
\begin{equation}\label{second_proof_step} \sum \limits_{F \in L(\mathcal{B}_n)} \left [  \prod \limits_{k=1}^{\ell(F)} \chi_{b_k}(t) \right ] P_{\ell(F)}(t) \end{equation} where $b_k$ are the size of the blocks of $F$, and $\ell(F)$ is the number of blocks.

Notice that the terms of (\ref{second_proof_step}) depend only on the size and number of blocks of $F$. This allows us to turn (\ref{second_proof_step}) into a sum over number partitions, provided we are careful to count all of the set partitions which correspond to each number partition
\begin{equation}\label{third_proof_step} \sum \limits_{\lambda_1 \vdash n} m( \lambda_1) \left [ \prod \limits_{k=1}^{\ell(\lambda_1)}  \chi_{b_k}(t) \right ] P_{\ell(\lambda_1)}(t). \end{equation} This is the reason for Definition \ref{index_set_k_defn} \ref{firstlbda}. 

To compute the coefficients $C_{n,i}$, we compute the degree $n-1-i$ coefficient from (\ref{third_proof_step}). 
For a polynomial $f = f(t) $, we write $f\{ \alpha_k\}$ to mean the coefficient of the $t^{\alpha_k}$ term. Also, for any integer $N$, and $\lambda_j \vdash N$, we write the partition and its blocks as follows,
\[\lambda_j = b_{1}^j + b_2^j + \cdots + b_{\ell(\lambda_j)}^j,\]
with $b_1^j \geq b_2^j \geq \cdots \geq b_{\ell(\lambda_j)}^j$.

The $n-1-i$ coefficient of (\ref{third_proof_step}) is
\begin{equation} \label{fourth_proof_step}
\sum \limits_{\substack{(\lambda_1 \vdash n, \alpha_1, \xi_1)\\ \alpha_1 + \xi_1 = n-1 -i}} \hspace{-.15 in}  m(\lambda_1) \left ( \left (\prod \limits_{k=1}^{\ell(\lambda_1)} \chi_{b_k^1}  \right )  \{ \alpha_1 \} \right ) P_{\ell(\lambda_1)}  \{\xi_1 \}.
\end{equation}
%Then one single term of (\ref{two}) becomes \begin{equation}\label{prodcoeffterm}m(\lambda_1) \Big ( (\prod \limits_{k}^{\ell(\lambda_1)} \chi_{b_i^1} )  \{ \alpha_1 \}\Big) P_{\ell(\lambda_1)} \{\beta_1 \}\end{equation}

This gives us $\alpha_1$ and $\xi_1$ of Definition \ref{index_set_k_defn} \ref{five_lbda}. We also get the reason for  Definition \ref{index_set_k_defn} \ref{seven_lbda}, \ref{eight_lbda}, and \ref{last_lbda} with $j=1$ because of the possible degrees of the two polynomials in (\ref{fourth_proof_step}).  Then $q=1$, if and only if we take the constant coefficient of $P_{\ell(\lambda_1)}$, so $\xi_1 = 0$ (otherwise we would need to further decompose $P_{\ell(\lambda_1)}$). This is the reason for Definition \ref{klpoly_defn} \ref{v} with $j=1$.

Observe that $P_{\ell(\lambda_1)} \{\xi_1 \}$ is exactly the Kazhdan-Lusztig coefficient $C_{\ell(\lambda_1), \xi_1}$.
We also know from equation (\ref{chipn}) that the terms coming from  
$\left (\prod \limits_{k}^{\ell(\lambda_1)} \chi_{b_i^1}\right )  \{ \alpha_1 \} $
will be a sum of products of Stirling numbers of the first kind% \cite{OT_hyperplanes, charala_ecomb}
. This allows us to write (\ref{fourth_proof_step}) as 
\begin{equation} \label{fifth_proof_step}
\sum \limits_{\substack{(\lambda_1 \vdash n, \alpha_1, \xi_1)\\ \alpha_1 + \xi_1 = n-1 -i}} \hspace{-.15 in} m(\lambda_1) \left ( \sum \limits_{\substack{ (d_k^1)_{k=1} ^{\ell (\lambda_1)}  \\   }} \prod \limits_{k=1}^{\ell(\lambda_1)} s( b^1_k, d^1_k) \right ) C_{\ell (\lambda_1), \xi_1}
\end{equation}
where $(d^1_k)= (d_1^1, ..., d_{\ell(\lambda_1)}^1)$ satisfies $\sum_{k=1}^{\ell(\lambda_1)} d_k^1= \alpha_1 + \ell (\lambda_1)$ and $1\leq d_k^1 \leq b_k^1$.

Proceeding with the induction, we write
\begin{equation}\label{insidecoeff}
C_{\ell(\lambda_1), \xi_1} = \sum \limits_{(\Lambda^{\prime}, A^{\prime}, \Xi^{\prime})} \left ( \prod_{j=2}^q m(\lambda_j) \sum \limits_{(d^j_k)_{k=1}^{\ell(\lambda_j)}} \prod \limits_{k=1}^{\ell(\lambda_j)} s(b_k^j , d_k^j) \right ).
\end{equation} 
where $(\Lambda^{\prime}, A^{\prime}, \Xi^{\prime}) \in \mathcal{K}_{\ell(\lambda_1), \xi_1}$, and $\Lambda^{\prime} = [\lambda_2, ..., \lambda_q]$, $A^{\prime} = [\alpha_2 , ..., \alpha_q]$ and $\Xi^{\prime} = [\xi_2, ..., \xi_q ]$. The induction hypothesis is why the remaining axioms in Definition \ref{index_set_k_defn} (\ref{second_lbda}, \ref{six_lbda}, and \ref{seven_lbda}-\ref{v} for $j \geq 2$) are satisfied.

Substituting (\ref{insidecoeff}) into (\ref{fifth_proof_step}) gives us that 
\begin{equation}\label{sixth_proof_step}
C_{n,i} = \sum \limits_{\substack{ \lambda_1 \vdash n \\ \alpha_1, \xi_1}} m(\lambda_1) \left ( \sum \limits_{(d_k^1)} \prod \limits_{k=1}^{\ell(\lambda_1)} s( b^1_k, d^1_k) \right ) \left ( \sum \limits_{(\Lambda^{\prime}, A^{\prime}, \Xi^{\prime})} \left ( \prod_{j=2}^q m(\lambda_j) \sum \limits_{(d^j_k)}\prod \limits_{k=1}^{\ell(\lambda_j)} s(b_k^j , d_k^j) \right ) \right ).
\end{equation}

Since $m(\lambda_1)$ and $(d_k^1)$ are both fixed with respect to $\Lambda^{\prime}$, we can bring the term 
\[m(\lambda_1) \sum \limits_{(d_k^1)_{k=1}^{\ell(\lambda_1)}} \prod \limits_{k=1}^{\ell(\lambda_1)} s(b_k^1, d_k^1)\]
inside the summation over $(\Lambda^{\prime}, A^{\prime}, \Xi^{\prime})$. %, and so (\ref{sixth_proof_step}) becomes
%\begin{equation}\label{seventh_proof_step}
% C_{n,i}=\sum \limits_{\substack{ \lambda_1 \vdash n \\ \alpha_1, \xi_1}}\sum \limits_{(\Lambda^{\prime}, A^{\prime}, \Xi^{\prime})} \left (\prod_{j=2}^q m(\lambda_j) \sum \limits_{(d^j_k)} \prod \limits_{k=1}^{\ell(\lambda_j)} s(b_k^j , d_k^j) \right ) \left (  m(\lambda_1)   \sum \limits_{ (d_k^1)}   \prod \limits_{k=1}^{\ell(\lambda_1)} s( b^1_k, d^1_k) ) \right ).
%\end{equation}
Observe that a change of indices simplifies (\ref{sixth_proof_step}) into
\begin{equation}\label{eleventh_proof_step}
C_{n,i}= \sum \limits_{\substack{ \lambda_1 \vdash n \\ \alpha_1, \xi_1}}\sum\limits_{(\Lambda^{\prime}, A^{\prime}, \Xi^{\prime})} \left (\prod_{j=1}^q m(\lambda_j) \sum \limits_{(d^j_k)} \prod \limits_{k=1}^{\ell(\lambda_j)} s(b_k^j , d_k^j) \right ). 
\end{equation}
%Based on our choice of $\lambda_2$ as a partition of $\ell(\lambda_1)$, {\color{red} this satisfies the axioms set forth in Definition \ref{index_set_k_defn} SHOW THIS} , and so we have
Now let $\Lambda = [\lambda_1,\lambda_2, ..., \lambda_q]$, $A  = [\alpha_1 , \alpha_2,  ..., \alpha_q]$ and $\Xi  = [\xi_1, \xi_2, ..., \xi_q ]$. \end{proof}

\begin{remark}
Observe that our $\xi$'s do not appear anywhere outside of $\mathcal{K}_{n,i}$ in Theorem \ref{main_theorem}. They act as placeholders to help compute the $\alpha$'s but could be eliminated in the formulation. In fact, it can be shown that \[ \xi_j = \sum \limits_{i=0}^{q-j-1} (-1)^i \ell( \lambda_{j+1}) + \sum \limits_{i=1}^{q-j} (-1)^j \alpha_{j+1} - \frac{1+(-1)^{q-j}}{2} .\]
\end{remark}

%{\color{red} The following serves as a proof of the conditions of Definition \ref{index_set_k_defn}

%The inequality \ref{firstlbda} is due to the defining recursion. Were $\ell(\lambda) > 2i$, then we would find that 

%\begin{align*} 
%\deg ( \prod \chi_{b_k} ) &= n - \ell (\lambda) < n -2i \\
%& \text{and} \\
%\deg ( P_{\ell(\lambda)}) &< \frac{\ell(\lambda) - 1}{2}. \\
%\end{align*}

%This would mean that,
%\begin{align*}
%\deg (P_{\ell(\lambda)} \prod \chi_{b_k} ) &< n - \ell(\lambda ) + \frac{\ell( \lambda )-1}{2} = n + \frac{-\ell(\lambda) -1}{2} \\
%&< n + \frac{-2i-1}{2} = n-i -\frac{1}{2}, \end{align*} a contradiction.
%}

%\color{black}

%------------- Section: Stirling number of the second kind formulas, corollaries, representation theory (maybe) -------------%
\section{Stirling numbers of the second kind and corollaries}\label{sec_corollaries}
In this section, we look at a few applications of Theorem \ref{main_theorem}. In \cite{z-polynomial_proudfootxuyoung} and \cite{whitney}, there are formulas given for arbitrary matroids, in terms of Whitney numbers of the second kind. Theorem \ref{main_theorem} can be viewed as a Whitney number of the first kind equivalent to these known formulas for the specific case of partition lattices and braid matroids. To make these connections clear, we recall the definition of Whitney numbers of the second kind.

\begin{definition}\label{flagdefn}
For a finite ranked poset $\P$ such that $rk(\P) = N$, and an ordered index set $I = (i_1, i_2, ..., i_r)$ with $i_j \in [\N]$ such that $i_1 \leq i_2 \leq ... \leq i_r$, the set of \text{partial flags} of $\P$ associated to $I$ is \[\P_I = \{ (X_1, X_2, ..., X_r) \in \P^r | \forall 1 \leq j \leq r, \rk(X_j) = i_j, \text{ and } X_1 \leq X_2 \leq ... \leq X_r\}. \]
The \text{ partial flag (also called multi-indexed) Whitney numbers of the second kind} are $W_I = |\P_I|.$
\end{definition}

In order to state the  next theorem we use an index set $S_i$ and two functions $s_i: S_i \rightarrow \Z$ and $t: S_i \rightarrow 2^{\Z [N]}$  constructed in \cite{whitney}.

\begin{theorem}[\cite{whitney}] \label{whitney_thm_wake}
For any finite, ranked lattice $\P$ such that $\rk(\P) = N$, the degree $i$ coefficient of the Kazhdan-Lusztig polynomial of $\P$ with $1 \leq i < N/2$ is 
\begin{equation*}
\sum \limits_{I \in S_i} (-1)^{s_i(I)} ( W_{t(I)}(\P) - W_I (\P)).
\end{equation*}
\end{theorem}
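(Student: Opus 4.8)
The plan is to unfold the recursive definition of $P_\P$ (Definition~\ref{klpoly_defn}\,(\ref{ax_tre})) into a single non-recursive alternating sum indexed by flags of $\P$, and then to trade the Whitney numbers of the \emph{first} kind that inevitably appear---through the Möbius-function coefficients of the characteristic polynomials $\chi$---for Whitney numbers of the \emph{second} kind by Möbius inversion in the incidence algebra of $\P$. The index set $S_i$, the sign $s_i$, and the shift map $t$ are then exactly the bookkeeping devices that record this process, and the two-term shape $W_{t(I)}-W_I$ is its residue.

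First I would isolate the target coefficient. Writing $N=\rk(\P)$ and $P_\P(t)=\sum_j C_j t^j$ with $\deg P_\P<N/2$, the palindromic axiom reads $t^N P_\P(t^{-1})=\sum_{F}\chi(\P_F,t)\,P_{\P^F}(t)$, whence $C_i=[t^{N-i}]\bigl(\sum_F \chi(\P_F,t)\,P_{\P^F}(t)\bigr)$. Since $i<N/2$ forces $N-i>N/2>\deg P_\P$, the term $F=\hat 0$, which merely reproduces $P_\P$, contributes nothing to the coefficient of $t^{N-i}$. This is precisely where the hypothesis $1\le i<N/2$ is spent, and it is what makes the extraction separate cleanly from the ``reflected'' copy of $P_\P$.

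Next I would substitute the same recursion into each surviving factor $P_{\P^F}$ and iterate until every Kazhdan--Lusztig polynomial has been reduced to $1$ on a rank-$0$ interval. Each round appends one flat to a flag $\hat 0=X_0<X_1<\cdots<X_r$ and contributes a factor $\chi([X_{j-1},X_j],t)$; reading off a prescribed degree at each stage selects one coefficient of each such factor. Recording the ranks $\rk(X_1)\le\cdots\le\rk(X_r)$ as a multi-index, together with the accumulated degree choices and parities, is exactly what produces the index set $S_i$ and the sign function $s_i\colon S_i\to\Z$. At this point every term is a sum, over all flags of a fixed rank-type, of products of coefficients of characteristic polynomials of intervals, and by (\ref{chipn}) (and its interval version) these coefficients are Whitney numbers of the first kind, i.e.\ signed sums of values of $\mu$.

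The last step is to collapse these signed Möbius sums into genuine counts of flags. Fixing the rank-type and summing $\mu$ over an interval telescopes, via the defining relation $\sum_{a\le x\le b}\mu(a,x)=0$ for $a<b$, so that only boundary terms survive; these boundary terms are the flag-counts with and without a constrained top rank, namely the partial-flag Whitney numbers $W_{t(I)}(\P)$ and $W_I(\P)$ of Definition~\ref{flagdefn}, and the surviving signs assemble into $(-1)^{s_i(I)}$. The main obstacle is not any one identity but this combinatorial accounting: one must define $S_i$, $s_i$, and $t$ so that the Möbius inversion leaves \emph{exactly} the clean two-term difference $W_{t(I)}-W_I$ rather than a longer alternating tail, and so that the signs match term by term. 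Verifying that the telescoping always stops at the boundary, and that the degree bound $i<N/2$ guarantees the unfolding terminates before the palindromic reflection of $P_\P$ can re-enter, are the delicate points around which the construction of $S_i$, $s_i$, and $t$ in \cite{whitney} is organized.
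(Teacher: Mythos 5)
First, a point of order: the paper you were given does not prove this theorem at all --- it is imported verbatim from \cite{whitney}, where the index set $S_i$ and the functions $s_i$ and $t$ are constructed, and it is used here only as an input to the corollaries of Section \ref{sec_corollaries}. So there is no in-paper proof to compare against, and your proposal must stand on its own. On its own, it has a genuine gap: the statement is a formula \emph{relative to specific objects} $S_i$, $s_i$, and $t$, so any proof must actually construct those objects and verify the formula against them. Your proposal explicitly defers exactly this (``one must define $S_i$, $s_i$, and $t$ so that the M\"obius inversion leaves exactly the clean two-term difference''), which is to defer the entire content of the theorem. What you have correctly identified is the general strategy --- unfold the recursion of Definition \ref{klpoly_defn}(\ref{ax_tre}), note that the $F=\hat 0$ term drops out of the degree-$(N-i)$ coefficient when $i<N/2$, iterate by extracting coefficients (correctly done degree by degree, since the recursion determines $t^{\rk}P(t^{-1})$ rather than $P(t)$, just as in the proof of Theorem \ref{main_theorem}), and then convert the coefficients of the characteristic polynomials from M\"obius-function data into flag counts. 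But the proof stops exactly where the theorem begins.

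Second, the one mechanism you do commit to is not right as stated. You claim the signed M\"obius sums ``telescope'' via $\sum_{a\le x\le b}\mu(a,x)=0$ so that ``only boundary terms survive,'' yielding precisely the two-term difference $W_{t(I)}(\P)-W_I(\P)$. The standard conversion from Whitney numbers of the first kind to flag Whitney numbers of the second kind goes through expanding $\mu$ as an alternating sum over chains of \emph{all} lengths and all rank types (M\"obius inversion in the incidence algebra, i.e.\ Philip Hall's theorem); nothing in that expansion collapses by itself to two terms per index. The entire difficulty of the theorem is the regrouping of that long alternating sum into pairs $W_{t(I)}-W_I$ with a coherent sign $(-1)^{s_i(I)}$, and that regrouping is exactly what the maps $t$ and $s_i$ encode. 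Asserting that they ``can be defined so that'' the collapse happens, without exhibiting them and proving the cancellation, is circular: it assumes the conclusion in the guise of bookkeeping.
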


The following elementary lemma, mentioned in \cite{z-polynomial_proudfootxuyoung}, states that flag Whitney numbers of the second kind can be written as products of Stirling numbers of the second kind in the case of partition lattices. However we include it here, as well as a proof, in order to state the main corollary of this section.

\begin{lemma}\label{product_whitneystirling_lemma}
For $I = (i_1, i_2, ..., i_k)$, and setting $i_0 = 0$, we have that for the partition lattice of $[n]$ \[W_I = \prod \limits_{j = 0}^{k-1}S(n-i_j, n-i_{j+1}). \]
\end{lemma}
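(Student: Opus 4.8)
The plan is to interpret a partial flag in the partition lattice combinatorially and then count the flags by building them up rank by rank, showing that each consecutive step contributes one Stirling number of the second kind. Recall that an element of $\mathcal{P}(n)$ of rank $i$ is a set partition of $[n]$ into exactly $n-i$ blocks, and that $X \leq Y$ in the partition lattice means $Y$ is a coarsening of $X$ (each block of $Y$ is a union of blocks of $X$). So a partial flag $(X_1, \ldots, X_k) \in \mathcal{P}(n)_I$ is a chain of successively coarser partitions where $X_j$ has $n - i_j$ blocks.

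First I would set up the key reduction: counting chains $X_1 \leq \cdots \leq X_k$ with prescribed block-counts factors as a product over the consecutive steps $X_j \leq X_{j+1}$, because once $X_j$ is chosen, the number of ways to extend to an admissible $X_{j+1}$ depends only on the number of blocks of $X_j$ and not on $X_j$ itself (by symmetry of the partition lattice). Concretely, the number of coarsenings of a fixed partition with $m$ blocks into a partition with $m'$ blocks is exactly the number of set partitions of an $m$-element set (the set of blocks of $X_j$) into $m'$ blocks, which is the Stirling number of the second kind $S(m, m')$. The plan is to prove this factorization cleanly by induction on $k$, peeling off the last step: I would fix the initial segment $(X_1, \ldots, X_{k-1})$, observe $X_{k-1}$ has $n - i_{k-1}$ blocks, and count the choices of $X_k \geq X_{k-1}$ with $n - i_k$ blocks as $S(n - i_{k-1}, n - i_k)$.

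With the convention $i_0 = 0$, the partition $X_0 = \hat{0}$ is the all-singletons partition with $n = n - i_0$ blocks, so the chain $X_1 \leq \cdots \leq X_k$ is equivalently a chain starting from $X_0$; the first factor $S(n - i_0, n - i_1) = S(n, n - i_1)$ then correctly counts the choices of $X_1$ itself (coarsenings of the discrete partition into $n - i_1$ blocks, which is just all partitions with that many blocks). Telescoping the consecutive counts gives
\[
W_I = \prod_{j=0}^{k-1} S(n - i_j, n - i_{j+1}),
\]
matching the claimed formula.

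The main obstacle — really the only nontrivial point — is justifying that the extension count $X_j \leq X_{j+1}$ is genuinely $S(n - i_j, n - i_{j+1})$ uniformly and that these counts multiply independently across steps. The first part is the standard bijection identifying coarsenings of a fixed partition with set partitions of its block set, so that merging the $n - i_j$ blocks of $X_j$ into $n - i_{j+1}$ groups is exactly a surjective-onto-blocks partition counted by the Stirling number of the second kind. The independence (and hence the product) is the observation that this count depends on $X_j$ only through its number of blocks, which is forced to be $n - i_j$ by the rank condition; I would phrase this as summing over the intermediate partitions and noting the inner count is constant, so the sum factors. I expect the bookkeeping to be entirely routine once this uniformity is stated, so the write-up should be short.
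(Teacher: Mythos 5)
Your proof is correct and takes essentially the same approach as the paper: both argue by induction on $k$, using the key fact that the coarsenings of a fixed partition with $m$ blocks into $m'$ blocks are counted by $S(m,m')$ (the restriction isomorphism $\mathcal{P}(n)^F \cong \mathcal{P}(\ell(F))$), together with the observation that this count depends only on the number of blocks, so the sum over intermediate partitions factors. The only cosmetic difference is that you peel off the last element of the chain, while the paper peels off the first, summing over flats $F$ of rank $i_1$ and applying the induction hypothesis inside $\mathcal{P}(n)^F \cong \mathcal{P}(n-i_1)$.
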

\begin{proof}
Induct on $k$. For $k = 1$, $W_{\{i\}} = s(n,n-i)$. For $k > 1$, we know $P(n)^F \cong P(n-rk(F))$. Hence, 
\begin{align*}
W_I &= \sum \limits_{F \in P(n)_{i_1}} W_{I \backslash \{i_1\}} P(n-i_1) \\
&= \sum \limits_{F \in P(n)_{i_1}} \prod \limits_{j=1}^{k-1} S(n-i_j, n-i_{j+1}) \\
&= \prod \limits_{j=0}^{k-1} S(n-i_j, n-i_{j+1}) \qedhere \end{align*} \end{proof}

As noted by Proudfoot, Xu, and Young in Remark 3.6 of \cite{z-polynomial_proudfootxuyoung}, Theorem \ref{whitney_thm_wake} bears strong similarity to a theorem of their own. They use a notation for the multi-indexed Whitney number (which they call the $r$-Whitney number) that orders the multi-index by corank, as opposed our notation (see Definition \ref{flagdefn}) which is ordered by rank (see \cite{z-polynomial_proudfootxuyoung}, Remark 3.1). They define it as such, because it simplifies the statement of their theorem. Here we state their theorem using our notation.

\begin{theorem}[\cite{z-polynomial_proudfootxuyoung}] \label{whitney_thm_pxy}
For all $i >0$, the degree $i$ coefficient of the matroid Kazhdan-Lusztig Polynomial, $C_i$ for a matroid of rank $N$ is 
\[ \sum \limits_{r=1}^i \sum \limits_{D \subset [r]} (-1)^{|D|} \sum \limits_{(a_m)} W_{ ( N- (a_{t_r (S)} + a_{r+1}), ..., N-(a_{t_1 (S)} + a_0 ))} \] 
where $W$ is the multi-indexed Whitney number of the sequence of integers $(a_m)$ such that $a_0=0$, $a_r = i$, $a_{r+1} = \rk(M) - i$, $a_0 < a_1 < \cdots < a_r < a_{r+1}$, and \[t_j (S) = \min\{ k | k \geq j \text{ and } k \not\in S\} \in [r+1]. \]
\end{theorem}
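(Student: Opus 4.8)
The plan is to obtain the formula by iterating the defining recursion of Definition \ref{klpoly_defn} until every Kazhdan--Lusztig factor has been traded for characteristic polynomials, and then converting those into the flag Whitney numbers of Definition \ref{flagdefn}. The entry point is coefficient extraction: since $\deg P_M < N/2$, the palindrome $t^N P_M(t^{-1})$ is supported in degrees above $N/2$, so taking the coefficient of $t^{N-i}$ (written $[t^{N-i}]$) on both sides of the recursion gives
\begin{equation*}
C_i = [t^{N-i}] \sum_{F \in L(M)} \chi(M_F, t)\, P_{M^F}(t), \qquad i < N/2 .
\end{equation*}

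First I would unfold the inner factors $P_{M^F}(t)$. Applying the recursion to $M^F$, and using $(M^F)^G \cong M^G$ together with $(M^F)_G \cong [F,G]$ for $G \ge F$, replaces each such factor by a sum over flats $G \ge F$ carrying a characteristic-polynomial factor $\chi([F,G],t)$, plus a correction needed to recover the genuine low-degree polynomial from the palindrome that the recursion actually produces. After $r$ unfoldings one has summed over a flag $\hat 0 = F_0 < F_1 < \cdots < F_r$ in $L(M)$, whose successive ranks are recorded by the strictly increasing sequence $a_0 = 0 < a_1 < \cdots < a_r = i$; because each genuine unfolding raises the tracked degree by at least one while remaining below the half-rank bound, $r$ is confined to $1 \le r \le i$, matching the outer sum. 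The palindromic reflection at the top of the flag is what injects the final value $a_{r+1} = N - i$.

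Next I would account for the signs and the index-shift function. The correction terms produced when inverting each palindromization, assembled over the $r$ unfoldings, yield the inclusion--exclusion $\sum_{S \subseteq [r]}(-1)^{|S|}$, with $S$ marking the steps at which a correction is applied; when a run of steps is corrected, the relevant rank is redirected to the next uncorrected level, which is exactly the role of $t_j(S) = \min\{k : k \ge j,\ k \notin S\}$ and produces the arguments $N - (a_{t_j(S)} + a_\bullet)$. Finally I would collapse the products of characteristic polynomials. Writing each M\"obius weight via Philip Hall's identity $\mu(\hat 0,x) = \sum_{k}(-1)^k c_k(\hat 0,x)$, where $c_k$ counts length-$k$ chains, turns the surviving sums over flats of prescribed rank into enumerations of flags, i.e.\ into the flag Whitney numbers $W$ of Definition \ref{flagdefn}; Lemma \ref{product_whitneystirling_lemma} is the partition-lattice shadow of this conversion, and agreement with Theorem \ref{whitney_thm_wake} provides a further check.

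The hard part will be the bookkeeping of the iterated palindromy inversion: proving that the correction terms combine into precisely $\sum_{S \subseteq [r]}(-1)^{|S|}$, that the redirection $t_j(S)$ together with the reflected value $a_{r+1} = N - i$ reproduces exactly the claimed Whitney index $(N - (a_{t_r(S)} + a_{r+1}), \dots, N - (a_{t_1(S)} + a_0))$, and that all M\"obius weights cancel so that the answer is a genuine (unsigned) count of flags. Carrying the two simultaneous indexings --- the flag ranks $a_m$ and the correction set $S$ --- through the induction without losing terms is where the real work lies.
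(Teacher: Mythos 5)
This theorem is one the paper does not prove at all: it is quoted from \cite{z-polynomial_proudfootxuyoung} (Proudfoot--Xu--Young), with the paper contributing only a translation of their corank-indexed notation into the rank-indexed notation of Definition \ref{flagdefn}. So there is no in-paper proof to compare your argument against, and your proposal has to be judged as a from-scratch proof of the cited theorem. Judged that way, it is a strategy outline rather than a proof, and the steps you defer are not peripheral: the claim that the corrections from iterated palindromy inversion ``combine into precisely'' the sum $\sum_{S\subseteq[r]}(-1)^{|S|}$, that the redirection function $t_j(S)$ and the reflected value $a_{r+1}=N-i$ produce exactly the claimed Whitney indices, and that all signs resolve so the summands are honest unsigned flag counts --- this \emph{is} the theorem. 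Asserting that the bookkeeping works out, and then flagging it as ``where the real work lies,'' leaves the entire content unproved.

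Beyond incompleteness, one step of your plan points in a direction that I do not believe can be completed as stated: expanding every M\"obius weight by Philip Hall's identity $\mu(\hat 0,x)=\sum_k(-1)^k c_k(\hat 0,x)$. That replaces each coefficient of each characteristic polynomial by a signed sum over chains of every length, so after $r$ unfoldings you have an expression with unboundedly many signed terms, whereas the target formula has finitely many terms, each an unsigned flag Whitney number, with all signs concentrated in the single factor $(-1)^{|D|}$. You offer no mechanism for the enormous cancellation this requires, and there is no evident one. The arguments that actually succeed organize the signs differently: in \cite{whitney} the M\"obius data is cancelled against zeta functions by incidence-algebra inversion ($\mu * \zeta = \delta$) rather than expanded into chains, and in \cite{z-polynomial_proudfootxuyoung} the characteristic polynomials are never expanded at all --- the proof runs through their palindromic $Z$-polynomial, inverting the relation between $P$ and $Z$ along chains of flats and applying palindromicity of $Z$ at each stage, so that unsigned flag counts appear automatically when the chains terminate in constant terms $P(0)=1$. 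If you want to complete your outline, you should replace the Hall-identity step with one of those two mechanisms; as written, the ``hard part'' you postpone is not bookkeeping but the statement itself.
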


With Theorems \ref{main_theorem}, \ref{whitney_thm_wake}, \ref{whitney_thm_pxy}, and Lemma \ref{product_whitneystirling_lemma} we can write the following corollary.

\begin{corollary}
For $n \geq 2$ and $i < \frac{n-1}{2}$
\begin{align*}
C_{n,i} =&\sum \limits_{I \in S_i} (-1)^{s_i(I)} \left ( \bigg (\prod \limits_{\substack{i_j \in t(I) \\ i_0=0}} S(n-i_j, n-i_{j+1})\bigg)- \bigg (\prod \limits_{\substack{i_j \in I \\ i_0=0}} S(n-i_j, n-i_{j+1})\bigg ) \right) \\
=& \sum \limits_{r=1}^i \sum \limits_{D \subset [r]} (-1)^{|D|} \sum \limits_{(a_m)} \prod \limits_{j = 0}^{r-1} S(a_{t_j(D)} + a_{j+1}, a_{t_{j+1}(D)} + a_{j+1} ) \\
=&  \sum \limits_{(\Lambda, A, \Xi)} \left [  \prod \limits_{j= 1}^{q} m(\lambda_j) \sum \limits_{(d^j_k) } \prod \limits_{k=1}^{\ell(\lambda_j)} s(b_k^j, d^j _k ) \right ],
\end{align*}
when all the conditions from Theorems  \ref{main_theorem}, \ref{whitney_thm_wake}, and \ref{whitney_thm_pxy} are satisfied.
\end{corollary}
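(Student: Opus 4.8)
The plan is to recognize that all three expressions in the display are computations of the single quantity $C_{n,i}$, and then to match each one against the relevant cited result. Throughout I take $M = B_n$, whose lattice of flats is the partition lattice $\mathcal{P}(n)$ of rank $N = n-1$, so that the degree $i$ coefficient of its Kazhdan-Lusztig polynomial is precisely $C_{n,i}$. With this identification the third line is simply Theorem \ref{main_theorem} restated, so it requires no further argument; the work is to reconcile the first two lines with the Whitney-number formulas. Since Theorems \ref{whitney_thm_wake} and \ref{whitney_thm_pxy} require $i \geq 1$, I would treat the excluded value $i=0$ separately, where $C_{n,0}=1$ is immediate, and otherwise assume $1 \le i < (n-1)/2$ so that all three theorems apply simultaneously.

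For the first equality I would apply Theorem \ref{whitney_thm_wake} to $\mathcal{P}(n)$, giving $C_{n,i} = \sum_{I \in S_i}(-1)^{s_i(I)}\bigl(W_{t(I)}(\mathcal{P}(n)) - W_I(\mathcal{P}(n))\bigr)$, and then expand each flag Whitney number of the second kind using Lemma \ref{product_whitneystirling_lemma}. Applying that lemma to $W_{t(I)}$ and to $W_I$ separately, with the convention $i_0 = 0$ inside each product, replaces every Whitney number by the corresponding product $\prod_j S(n - i_j, n - i_{j+1})$ and reproduces the first displayed line verbatim. This step is a direct substitution with no real obstruction.

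For the second equality I would apply Theorem \ref{whitney_thm_pxy} with $\rk(M) = N = n-1$, writing $C_{n,i}$ as the alternating triple sum over $r$, $D \subset [r]$, and admissible sequences $(a_m)$ of the multi-indexed Whitney numbers $W_{(N-(a_{t_r(D)}+a_{r+1}),\dots,N-(a_{t_1(D)}+a_0))}$, and then expand each such number by Lemma \ref{product_whitneystirling_lemma}. The main obstacle lies here: Proudfoot, Xu and Young order their multi-index by corank whereas Lemma \ref{product_whitneystirling_lemma} is phrased for the rank-ordered flags of Definition \ref{flagdefn}, so I must reverse the index, install the convention $i_0 = 0$, and simplify each argument $n - i_m$ using $N = n-1$ until the Stirling arguments $a_{t_j(D)}+a_{j+1}$ and $a_{t_{j+1}(D)}+a_{j+1}$ of the statement emerge. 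Once this bookkeeping of the two ordering conventions and the attendant index shifts is carried out, the second line follows.

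Since each of the three right-hand sides has now been shown to equal $C_{n,i}$ on the common range of validity, they are equal to one another, which is exactly the assertion of the corollary.
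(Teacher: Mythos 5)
Your proposal is correct and matches the paper's (implicit) argument exactly: the corollary is obtained by applying Theorem \ref{whitney_thm_wake} and Theorem \ref{whitney_thm_pxy} to the partition lattice $\mathcal{P}(n)$ of rank $n-1$, expanding each flag Whitney number via Lemma \ref{product_whitneystirling_lemma}, and identifying the third line as Theorem \ref{main_theorem}, so all three expressions compute $C_{n,i}$. Your separate treatment of the rank/corank ordering conventions and of the range of $i$ is precisely the bookkeeping the paper absorbs into its restatement of Theorem \ref{whitney_thm_pxy} and into the closing caveat of the corollary.
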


We now state this for the specific case of $i=1$ for all $n$.

\begin{corollary}\label{corolla_two} For $n \geq 2$,
\[C_{n,1} = s(n,n-1)+\sum_{\substack{\lambda = \lambda_1  + \lambda_2 \\ \lambda \vdash n} } m(\lambda) = S(n,2)-S(n,n-1)\]
where the sum is over partitions of $n$ into two blocks. \end{corollary}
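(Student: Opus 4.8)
The plan is to establish the two claimed equalities for $C_{n,1}$ separately, treating $C_{n,1} = s(n,n-1) + \sum m(\lambda)$ as a direct specialization of Theorem \ref{main_theorem}, and $C_{n,1} = S(n,2) - S(n,n-1)$ as a specialization of either Corollary's Whitney-number formula (equivalently Theorem \ref{whitney_thm_wake} with $i=1$). First I would enumerate $\mathcal{K}_{n,1}$ explicitly. Since $i=1$ and we need $\ell(\lambda_j) \geq 2$ to have $\xi_j > 0$, the constraints \ref{last_lbda} and \ref{v} force the index set to be very small. Tracing through the axioms, the sequences $\Lambda$ of length $q=1$ require $\xi_1 = 0$, and \ref{five_lbda} gives $\alpha_1 = n-2$; combined with \ref{seven_lbda}, namely $\alpha_1 \leq |\lambda_1| - \ell(\lambda_1) = n - \ell(\lambda_1)$, this forces $\ell(\lambda_1) \leq 2$. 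The case $\ell(\lambda_1)=1$ (so $\lambda_1 = [n]$) contributes the single block term, and I expect the associated Stirling sum to collapse to $s(n,n-1)$. The case $\ell(\lambda_1)=2$ forces $\alpha_1 = n-2 = |\lambda_1|-\ell(\lambda_1)$, which pins the $(d_k^1)$ to $d_k^1 = b_k^1$ for each block, so each $s(b_k^1, b_k^1) = 1$, and the contribution of each two-block partition reduces to $m(\lambda)$. Summing over all two-block number partitions $\lambda \vdash n$ then gives the $\sum m(\lambda)$ term. I would also check there are no longer sequences ($q \geq 2$) by observing that any $\xi_1 > 0$ needed to continue the recursion would require $\ell(\lambda_1) \geq 3$, but then \ref{seven_lbda} with $\alpha_1 = n-2-\xi_1$ becomes incompatible—this case-exclusion is worth a careful line.

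For the second equality $s(n,n-1) + \sum m(\lambda) = S(n,2) - S(n,n-1)$, I would argue combinatorially rather than through the Whitney formula, since a direct identity among Stirling numbers is cleaner. The quantity $\sum_{\lambda=\lambda_1+\lambda_2 \vdash n} m(\lambda)$ counts all set partitions of $[n]$ into exactly two nonempty blocks, which is precisely $S(n,2)$. Meanwhile, the well-known identities $s(n,n-1) = -\binom{n}{2}$ (unsigned $|s(n,n-1)| = \binom{n}{2}$) and $S(n,n-1) = \binom{n}{2}$ should be invoked. The sign convention in the paper matters here: equation (\ref{chipn}) uses signed Stirling numbers of the first kind, so $s(n,n-1) = -\binom{n}{2}$, and then $s(n,n-1) + S(n,2) = S(n,2) - \binom{n}{2} = S(n,2) - S(n,n-1)$, which is exactly the claim. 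I would state these two classical evaluations as known facts (citing \cite{stan_ecomb} or treating them as standard) and verify the arithmetic.

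The main obstacle I anticipate is bookkeeping around the sign convention and verifying that the single-block ($\ell(\lambda_1)=1$) term of Theorem \ref{main_theorem} really evaluates to $s(n,n-1)$ with the correct sign. When $\ell(\lambda_1)=1$ the inner sum over $(d_k^1)$ has a single index $d_1^1$ with $d_1^1 = \alpha_1 + \ell(\lambda_1) = (n-2)+1 = n-1$ and $b_1^1 = n$, giving exactly $s(n,n-1)$—so this term is a single Stirling number, and I must confirm it is the signed one (which it is, by (\ref{chipn})). The second potential subtlety is confirming that $m(\lambda)$ for a two-block partition equals the number of set partitions into two blocks of those sizes; this follows from the definition of $m(\lambda)$ but I would double check the edge case $\lambda = [n/2, n/2]$ where the two blocks are equal, since then $m(\lambda)$ includes the division by the symmetry factor $2!$ already built into its formula. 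Once these evaluations are confirmed, both equalities follow by straightforward substitution, so the proof is short; the care lies entirely in the enumeration of $\mathcal{K}_{n,1}$ and in matching sign conventions.
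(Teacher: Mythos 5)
Your proposal is correct. The paper states this corollary without proof (it is presented as an immediate specialization of Theorem \ref{main_theorem}, together with the value of $C_{n,1}$ known from \cite{klpoly_epw}), so your write-up supplies exactly the missing details, and they check out. In particular, the one step you flagged as ``worth a careful line''---excluding sequences with $q \geq 2$---does close: if $q \geq 2$, then axiom \ref{v} forces $\xi_1 \geq 1$, axiom \ref{last_lbda} then forces $\ell(\lambda_1) \geq 2\xi_1 + 2$, while axioms \ref{five_lbda} and \ref{seven_lbda} give $n - 2 - \xi_1 = \alpha_1 \leq n - \ell(\lambda_1)$, i.e.\ $\ell(\lambda_1) \leq \xi_1 + 2$; combining yields $2\xi_1 + 2 \leq \xi_1 + 2$, so $\xi_1 \leq 0$, a contradiction. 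Your enumeration of the surviving $q = 1$ triples is also right: $\lambda_1 = [n]$ gives $d_1^1 = n-1$ and hence the term $m(n)s(n,n-1) = s(n,n-1)$, while each two-block $\lambda_1$ pins $d_k^1 = b_k^1$ and contributes $m(\lambda_1)$. (Do not be misled by the paper's appendix table, which writes these two-block terms with trailing factors such as $m(2)s(2,2)$; those factors all equal $1$ and reflect a redundant expansion of the constant term $C_{\ell(\lambda_1),0}$, so the table agrees numerically with your enumeration.) For the second equality, where the paper would lean on the computation $C_{n,1} = S(n,2) - S(n,n-1)$ from \cite{klpoly_epw} or on Theorems \ref{whitney_thm_wake}--\ref{whitney_thm_pxy}, you instead verify it directly from $\sum_{\lambda} m(\lambda) = S(n,2)$ together with the classical evaluations $s(n,n-1) = -\binom{n}{2}$ and $S(n,n-1) = \binom{n}{2}$; this is more elementary and self-contained, and your attention to the signed convention in equation (\ref{chipn}) is precisely what makes the signs come out right.
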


Another interesting fact, known from \cite{adamchik_stirling}, is that that Stirling numbers of the first kind can be written non-recursively as 
\[ s(n,m) = \frac{(n-1)!}{(m-1)!} \sum \limits_{i_1 = 1}^{n-1} \sum \limits_{i_2 = i_1 + 1}^{n-1} \cdots  \sum \limits_{i_m = i_{m-1} + 1}^{n-1}  \frac{m!}{i_1 i_2 \dots i_m}. \]

\begin{corollary} For $n \geq 2$ and $i < \frac{n-1}{2}$, the following non-recursive formula holds
\[ C_{n,i} = \sum \limits_{(\Lambda, A, \Xi)} \left [  \prod \limits_{j= 1}^{q} m(\lambda_j) \sum \limits_{(d^j_k) } \prod \limits_{k=1}^{\ell(\lambda_j)} \left ( \frac{(b_k^j-1)!}{(d^j _k-1)!} \sum \limits_{i_1 = 1}^{b_k^j-1} \sum \limits_{i_2 = i_1 + 1}^{b_k^j-1} \cdots  \sum \limits_{i_{d^j _k} = i_{d^j _k-1} + 1}^{b_k^j-1}  \frac{d^j _k!}{i_1 i_2 \dots i_{d^j _k}} \right ) \right ]. \]
\end{corollary}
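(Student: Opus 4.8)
The plan is to derive this corollary as an immediate consequence of Theorem \ref{main_theorem} by substituting the non-recursive expression for the Stirling numbers of the first kind, taken from \cite{adamchik_stirling}, directly into the formula for $C_{n,i}$. There is essentially no new content beyond a formal substitution, so the proof should be extremely short. First I would recall that Theorem \ref{main_theorem} gives, for $n \geq 2$ and $i < \frac{n-1}{2}$,
\[ C_{n,i} = \sum \limits_{(\Lambda, A, \Xi)} \left [  \prod \limits_{j= 1}^{q} m(\lambda_j) \sum \limits_{(d^j_k) } \prod \limits_{k=1}^{\ell(\lambda_j)} s(b_k^j, d^j _k ) \right ], \]
with the index set $\mathcal{K}_{n,i}$ and the inner summation conditions exactly as stated there.

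Next I would invoke the identity from \cite{adamchik_stirling} quoted just above the corollary, namely
\[ s(m,\ell) = \frac{(m-1)!}{(\ell-1)!} \sum \limits_{i_1 = 1}^{m-1} \sum \limits_{i_2 = i_1 + 1}^{m-1} \cdots  \sum \limits_{i_{\ell} = i_{\ell-1} + 1}^{m-1}  \frac{\ell!}{i_1 i_2 \dots i_{\ell}}, \]
and apply it with $m = b_k^j$ and $\ell = d_k^j$ to each factor $s(b_k^j, d_k^j)$ appearing in the innermost product. Since the summation conditions in Theorem \ref{main_theorem} guarantee $1 \leq d_k^j \leq b_k^j$, each such Stirling number lies in the range where the \cite{adamchik_stirling} formula is valid, so the substitution is legitimate termwise. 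Carrying out this replacement inside the product over $k$ and leaving the outer sum over $\mathcal{K}_{n,i}$ and the sum over $(d^j_k)$ untouched yields precisely the displayed expression in the corollary.

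The only points requiring a sentence of care are bookkeeping ones rather than genuine obstacles: one should confirm that the index bounds match (the conditions $\sum_k d_k^j = \alpha_j + \ell(\lambda_j)$ and $1 \leq d_k^j \leq b_k^j$ are carried over verbatim from Theorem \ref{main_theorem}), and that the boundary cases where $d_k^j = 1$ behave correctly under the \cite{adamchik_stirling} formula (there the nested sum degenerates to the single term $1/i_1$ summed over $i_1$, reproducing $s(m,1) = (m-1)!$). Since the substitution is purely formal and applies independently to each factor, I expect no real difficulty; the main thing to get right is simply transcribing the nested-sum notation cleanly so that the running index $i_{d^j_k}$ and its bounds are unambiguous. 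Accordingly the proof can be stated in two or three lines.

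\begin{proof}
By Theorem \ref{main_theorem}, for $n \geq 2$ and $i < \frac{n-1}{2}$ we have
\[ C_{n,i} = \sum \limits_{(\Lambda, A, \Xi)} \left [  \prod \limits_{j= 1}^{q} m(\lambda_j) \sum \limits_{(d^j_k) } \prod \limits_{k=1}^{\ell(\lambda_j)} s(b_k^j, d^j _k ) \right ]. \]
The summation conditions ensure $1 \leq d_k^j \leq b_k^j$ for every factor, so the formula of \cite{adamchik_stirling} applies to each $s(b_k^j, d_k^j)$, giving
\[ s(b_k^j, d_k^j) = \frac{(b_k^j-1)!}{(d^j _k-1)!} \sum \limits_{i_1 = 1}^{b_k^j-1} \sum \limits_{i_2 = i_1 + 1}^{b_k^j-1} \cdots  \sum \limits_{i_{d^j _k} = i_{d^j _k-1} + 1}^{b_k^j-1}  \frac{d^j _k!}{i_1 i_2 \dots i_{d^j _k}}. \]
Substituting this expression for each factor in the innermost product yields the claimed non-recursive formula.
\end{proof}
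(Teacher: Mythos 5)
Your proof is correct and takes exactly the paper's route: the paper gives no separate argument for this corollary, treating it as an immediate termwise substitution of the quoted \cite{adamchik_stirling} identity for each factor $s(b_k^j, d_k^j)$ into Theorem \ref{main_theorem}, which is precisely what your proof does. One minor caution: your parenthetical sanity check that $d_k^j = 1$ recovers $s(m,1) = (m-1)!$ does not actually hold for the identity as quoted (it gives $(m-1)!\sum_{i_1=1}^{m-1} 1/i_1$), but since the corollary is stated relative to that quoted identity, this side remark does not affect the substitution argument itself.
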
 
We conclude with a table listing various values of $C_{n,i}$.

	\begin{longtable}[t]{|c||l|}
	\hline
	$C_{2,0}$ & $m(2)s(2,2)$ \\
	\hline \hline
	$C_{3,0}$ & $m(3)s(3,3)$\\
	\hline \hline
	$C_{4,0}$ & $m(4)s(4,4)$\\
	\hline
	$C_{4,1}$ & $m(4)s(4,3)+m(3+1)s(3,3)s(1,1)m(2)s(2,2)+m(2+2)s(2,2)s(2,2)m(2)s(2,2)$\\
	\hline \hline
	$C_{5,0}$ & $m(5)s(5,5)$\\
	\hline
	$C_{5,1}$ & \shortstack[l]{\\ $m(5)s(5,4)+m(3+2)s(2,2)s(3,3)m(2)s(2,2) +m(4+1)s(4,4)s(1,1)m(2)s(2,2)$}    \\
	\hline \hline
	$C_{6,0}$ & $m(6)s(6,6)$\\
	\hline
	$C_{6,1}$ & \shortstack[l]{ \\ $m(6)s(6,5) + m(5+1)s(5,5)s(1,1)m(1)m(2)s(1,1)s(2,2)  $ \\ $+ m(4+2) \big (m(2)s(2,2)s(4,4)s(2,2) \big)$\\$ +m(3+3) \big ( m(2)s(2,2) s(3,3)s(3,3) \big )$}   \\
	\hline 
	$C_{6,2}$ & \shortstack[l]{$m(6)s(6,4) + m(5+1)s(5,3)m(2)s(2,2) + m(4+2) \big ( s(4,4)s(2,2)m(2)s(2,2) \big )$\\
					$ + m(3+3) \big (2 m(2) s(2,2) s(3,2) s(3,3)  \big ) $\\ $+m(4+1+1) s(1,1)s(1,1)s(4,4) m(3) s(3,3)$ \\ 
					$+m(3+2+1)s(1,1)s(2,2)s(3,3)m(3)s(3,3)$ \\ $+m(2+2+2)s(2,2)s(2,2)s(2,2) m(3)s(3,3)$ \\
					 $+m(3+1+1+1) s(1,1)^3 s(3,3) \big (  m(4)s(4,3) + m(3+1) s(3,3) s(2,2) \big )$ } \\
	%\shortstack[l]{\\ $s(6,4) + 6 s(1,1)s(2,2)s(5,4) + 15 s(2,2)s(4,4)s(2,1)$\\ $+ 15 s(2,2)^2 s(4,3) + 20 s(2,2)s(3,3)s(3,2) +  15 s(1,1)^2 s(3,3) s(4,4)$ \\ $+ 60 s(1,1) s(3,3)^2s(2,2) + 15 s(2,2)^3s(3,3) $\\$ + 20 s(1,1)^3 s(3,3) \big(s(4,3)+4s(3,3)s(2,2,)s(1,1) + 3s(2,2)^3)  \big) $ \\ $+ 45 (s(1,1)^2 s(2,2)^2 \big(s(4,3)+4s(3,3)s(2,2,)s(1,1) + 3s(2,2)^3)  \big)$ } \\
	\hline \hline
	$C_{7,0}$ & $m(7)s(7,7)$\\
	\hline
	$C_{7,1}$ & \shortstack[l]{$ m(7)s(7,6) + m(6+1)s(6,6)s(2,2)$\\ $+m(5+2)s(5,5)s(2,2)s(2,2) + m(4+3)s(4,4)s(3,3)s(2,2)$} \\
	\hline
	$C_{7,2}$ & \shortstack[l]{  \\
	$ m(7)s(7,5) + m (1+6) \big ( s(6,5)s(1,1)m(2)s(2,2) \big )$ \\ 
$+ m(5+2) \big ( s(5,5)m(2)s(2,2)s(2,1) + s(5,4)s(2,2)m(2)s(2,2) \big ) $\\
$+ m(4+3) \big ( s(4,4)s(3,2)m(2)s(2,2) + s(4,3)s(3,3)m(2)s(2,2)\big) $\\
$+ m(5+1+1) \big ( s(5,5)s(1,1)^2m(3) s(3,3) \big ) $\\
$+ m(4+2+1) \big ( s(4,4) s(2,2) s(1,1)m(3) s(3,3) \big )$ \\
$+ m(3+3+1) \big ( s(3,3)s(3,3)s(1,1)m(3)s(3,3) \big ) $\\
$+ m(3+2+2) \big ( s(3,3)s(2,2)s(2,2)m(3)s(3,3) \big) $\\
$+ m(4+1+1+1) \big (s(4,4)s(1,1)^3 (m(4)s(4,3) + m(3+1)s(1,1)s(3,3)m(2)s(2,2) +$ \\ 
                             $\hspace{1.35 in} m(2+2)s(2,2)^2 m(2)s(2,2) \big ) $\\
$+ m(3+2+1+1) \big ( s(3,3)s(1,1)^2s(2,2) ( m(4)s(4,3) + m(3+1)s(1,1)s(3,3)m(2)s(2,2) +$ \\ 
                             $\hspace{1.35 in} m(2+2)s(2,2)^2 m(2)s(2,2) \big ) $\\
$+ m(2+2+2+1) \big ( s(2,2)^3 s(1,1)  ( m(4)s(4,3) + m(3+1)s(1,1)s(3,3)m(2)s(2,2) + $\\ 
                             $\hspace{1.35 in} m(2+2)s(2,2)^2 m(2)s(2,2) \big )$}\\
	\hline
	\end{longtable}

\setcounter{table}{0}

\color{black}

%\clearpage

\bibliographystyle{amsalpha}
\bibliography{kl_stirling-sources}

\end{document}